\documentclass[12pt]{article}
\usepackage{amssymb,amsmath,amsopn,amsfonts}
\usepackage{latexsym}
\usepackage{array}
\usepackage{graphicx}
\usepackage{amsthm}
\usepackage{colordvi,multicol}
\usepackage{epsf}
\usepackage{tikz}
\usepackage[T1]{fontenc}
\usepackage[utf8]{inputenc}
\usepackage{authblk}
\usepackage{hyperref}
\textheight=23cm \textwidth=17cm \topmargin=-0.8cm
\oddsidemargin=-0.6cm \hoffset 0.1cm \voffset -0.2cm

\makeatletter
\newcommand\RSloop{\@ifnextchar\bgroup\RSloopa\RSloopb}
\makeatother
\newcommand\RSloopa[1]{\bgroup\RSloop#1\relax\egroup\RSloop}
\newcommand\RSloopb[1]%
  {\ifx\relax#1%
   \else
     \ifcsname RS:#1\endcsname
       \csname RS:#1\endcsname
     \else
       \GenericError{(RS)}{RS Error: operator #1 undefined}{}{}%
     \fi
   \expandafter\RSloop
   \fi
  }
\newcommand\X{0}
\newcommand\RS[1]%
  {\begin{tikzpicture}
     [every node/.style=
       {circle,draw,fill,minimum size=1.5pt,inner sep=0pt,outer sep=0pt},
      line cap=round
     ]
   \coordinate(\X) at (0,0);
   \RSloop{#1}\relax
   \end{tikzpicture}
  }
\newcommand\RSdef[1]{\expandafter\def\csname RS:#1\endcsname}
\newlength\RSu
\RSu=1ex
\RSdef{i}{\draw (\X) -- +(90:\RSu) node{};}
\RSdef{l}{\draw (\X) -- +(135:\RSu) node{};}
\RSdef{r}{\draw (\X) -- +(45:\RSu) node{};}
\RSdef{I}{\draw (\X) -- +(90:\RSu) coordinate(\X I);\edef\X{\X I}}
\RSdef{L}{\draw (\X) -- +(135:\RSu) coordinate(\X L);\edef\X{\X L}}
\RSdef{R}{\draw (\X) -- +(45:\RSu) coordinate(\X R);\edef\X{\X R}}

\newcommand{\normmm}[1]{{\left\vert\kern-0.25ex\left\vert\kern-0.25ex\left\vert #1 
    \right\vert\kern-0.25ex\right\vert\kern-0.25ex\right\vert}}

\newtheorem{lemma}{Lemma}
\newtheorem{Thm}{Theorem}
\newtheorem*{remark}{Remark}
\newtheorem*{definition}{Definition}

\newfam\msbfam
\font\tenmsb=msbm10 \textfont\msbfam=\tenmsb \font\sevenmsb=msbm7
\scriptfont\msbfam=\sevenmsb \font\fivemsb=msbm5
\scriptscriptfont\msbfam=\fivemsb




\numberwithin{equation}{section}
\numberwithin{lemma}{section}
\numberwithin{Thm}{section}

\providecommand{\keywords}[1]{\textbf{\textbf{Keywords:}} #1}

\begin{document}
\bigbreak

\title{\bf On the small time asymptotics of the dynamical $\Phi^4_1$ model \thanks{Research supported in part  by NSFC (No.11771037). Financial supported by the DFG through the CRC 1283 "Taming uncertainty and profiting from randomness and low regularity in analysis, stochastics and their applications" is acknowledged.}}
\author[1,2]{\bf Bingguang Chen\thanks{bchen@math.uni-bielefeld.de}}
\author[1,2]{\bf Xiangchan Zhu\thanks{Corresponding author: zhuxiangchan@126.com}}
\date{}
\affil[1]{Academy of Mathematics and System Science, Chinese Academy of Science, Beijing 100190, China }
\affil[2]{Department of Mathematics, University of Bielefeld, D-33615 Bielefeld, Germany}
\renewcommand*{\Affilfont}{\small\it}
\renewcommand\Authand{, }

\maketitle

\begin{abstract}

In this paper,  we establish a small time large deviation principle (small time asymptotics) for the   
dynamical $\Phi^4_1$ model, which not only involves study of the space-time white noise with intensity $\sqrt{\varepsilon}$, but also the investigation of the effect of the small (with $\varepsilon$)  nonlinear drift. 

\end{abstract}

\keywords{$\Phi^4_1$ model, space-time white noise, small time asymptotics, large deviation }

\vspace{2mm}


\section{Introduction}

In this paper we study small time behaviour of the dynamical $\Phi^4_1$ model :
\begin{equation}\label{1.1}\aligned
&du(t)=\Delta u(t)dt - u(t)^3dt+ dW(t), \text{ } \text{for}\text{ } (t,x)\in[0,T]\times \mathbb{T},\\
&u(0)=u_0,
\endaligned
\end{equation}
where $\mathbb{T}$ is one dimensional torus and $W $ is a cylindrical Wiener process on  $L^2(\mathbb{T})$.  

Equation (\ref{1.1}) in $d$ dimensional case describes the natural reversible dynamics for the Euclidean $\Phi^4_d$ quantum field theory. It is formally given by the following probability measure 
$$\nu(d\phi)=N^{-1}\prod_{x\in \mathbb{T}^d}d\phi(x)\exp[-\int_{\mathbb{T}^d}(\frac{1}{2}|\nabla\phi(x)|^2+\phi^4(x))dx],$$
where $N$ is a renormalization constant and $\phi$ is the real-valued field. This measure was  investigated intensively in the 1970s and 1980s (see \cite{Glimm87} and the references therein).  Parisi and Wu in  \cite{PW81} proposed a program named stochastic quantization of getting the measure as limiting distributions of stochastic processes, especially as solutions to nonlinear stochastic differential equations(see \cite{JLM85}). The issue to study $\Phi^4_d$ measure is to solve and study properties of (\ref{1.1}) in $d$ dimensional case. 

The dynamical $\Phi^4_1$ model with Dirichlet boundary condition (also named as reaction-diffusion equations) was studied systematically in \cite{DP04}. In \cite{DP04} not only existence and uniqueness of solutions to this equation have been obtained, but also the strong Feller property and ergodicity. For more details and more properties we refer to \cite[Section 4]{DP04}. We can obtain the results on the torus case similarly.

In 2 and 3 dimensions, the equation (\ref{1.1}) falls in the category of the singular SPDEs  due to the irregular nature of the noise $dW(t)$. Solutions are expected to take value in distribution spaces of negative regularity, which means the cubic term in the equation is not well-defined in the classical sense and renormalization has to be done for the nonlinear term.

In two spatial dimensions, weak solutions to (\ref{1.1}) have been first  constructed in \cite{AR91} by using Dirichlet form theory. In \cite{DD03} the authors decomposed (\ref{1.1}) into the linear equation and a shifted equation (so called Da Prato-Debussche trick) and obtain a probalistical strong solution via a fixed-point argument and invariant measure $\nu(d\phi)$. Recently, global well-posedness to (\ref{1.1}) via a PDE argument has been obtained in \cite{MW15}. See also \cite{RZZ15} for a study of relation between weak solutions and strong solutions.

By Hairer’s  breakthrough work on regularity structures \cite{Hai14}, (\ref{1.1}) in the three dimensional case is well-defined and local existance and uniqueness can be obtained.  In \cite{GIP15}  Gubinelli, Imkeller and Perkowski  introduced paracontrolled distributions method for singular SPDEs and by this method in  \cite{CC18} the authors  also obtained local-in-time well-posedness result. Mourrat and Weber in \cite{MW17CDFI} gave existence and uniqueness of global-in-time solutions on $\mathbb{T}^3$ by energy estimate and mild formulation. Recently, Gubinelli and Hofmanová in \cite{GH18} proved the global existence and uniqueness results for (\ref{1.1}) on $\mathbb{R}^3$ based on maximum principle and localization technique.

The purpose of this paper is to study the small time asymptotics (large deviations) of the dynamical $\Phi^4_1$ model.  We try to estimate the limiting behavior of the solution in time interval $[0,t]$ as $t$ goes to zero,  which describes how fast the solution approximating its initial data in sense of probability. The small time asymptotics in this case is also theoretically interesting, since the study involves the investigation of the small rough noise and the effect of the small nonlinear drift. The study of the small time asymptotics of finite dimensional diffusion processes was initiated by Varadhan in the influential work \cite{Var67}. The small time asymptotics (large deviation) of SPDEs were studied in \cite{Zh00}, \cite{XZ08}, \cite{LRZ13} and references therein.

We also want to mention the following small time asymptotics result by Dirichlet form. By \cite{AR91} and \cite{ZZ17} we know that the dynamical $\Phi^4_d$ model associated with a conservative and local Dirichlet form. Then the main result in \cite{HR03} implies the following Varadhan-type small time asymptotics for the dynamical $\Phi^4_d$ model: 
$$\lim_{t\rightarrow 0}t \log P^\nu(u(0)\in A, u(t)\in B)=-\frac{d(A,B)^2}{2},$$
for all measurable sets $A, B$, where $d$ is the intrinsic metric associated with the Dirichlet form of $\Phi^4_d$ model (see \cite{HR03} for the definition). However, these results is for the stationary case or holds for $\nu(d\phi)$-almost every starting point (see \cite[Theorem 1.3]{HR03} for a stronger version). The small time large deviation results in our paper hold for every starting point and is of its own interest.

Let $\varepsilon>0$, by the scaling property of the Brownian motion, it is easy to see that $u(\varepsilon t)$ coincides in law with the solution of the following equation:
\begin{equation}\label{1.2}\aligned
&du_\varepsilon=\varepsilon\Delta u_\varepsilon dt-\varepsilon u_\varepsilon^3dt+\sqrt{\varepsilon}dW,\\
&u_\varepsilon(0)=u_0.
\endaligned
\end{equation}
To establish the small time large deviation, we follow the idea of \cite{XZ08} to prove the solution to (\ref{1.2}) is exponentially equivalent to the solution to the linear equation. In our case, due to  the irregularity of the  white noise,  the It\^o formula in \cite{XZ08} cannot be uesd. Our calculations are based on the energy estimate for the shifted equation (see (\ref{shifted equation})) and the mild formulation.  

In \cite{HW15} the small noise large deviation principle for the dynamical $\Phi^4_d$ model is established. The authors considered the solution  as a continuous map $F$ of the noise $\sqrt{\varepsilon}\xi$ and some renormalization terms which belong to the Wiener chaos with the help  of the regularity structure, then the  result follows from the large deviation for Wiener chaos and the contraction principle. However, this method seems not work for the small time asymptotics problem. By this method, we have to prove the large deviation principle for the solution to linear equations in a better space (compared to Theorem \ref{weak result} in our paper), which seems not true since $e^{\varepsilon\Delta}\rightarrow I$ as $\varepsilon\rightarrow 0$ and the smoothing effect of heat flow will disappear. We will also meet this problem for the higher dimensional case which we will try to solve in the future.

 \vskip.10in

{\textbf{Organization of the paper}}

In Section 2, we introduce the basic  notation and recall some preliminary results.  In Section 3, we give the definition of large deviation principle, and prove the small time asymptotics for the linear equation. The small time large deviation for the dynamical $\Phi^4_1$ model (Theorem \ref{1d case}) is established in Section 4. 

 \vskip.10in

{\textbf{Acknowledgement}}

The authors would like to thank Rongchan Zhu for helpful discussions and also Peter Friz for pointing out \cite{HR03} to us.

\section{Preliminary}

In the following we recall some definitions of Besov spaces. For a general introduction to the theory we refer to \cite{BCD11}, \cite{Tri78}, \cite{Tri06}. First we introduce the following
notations. 
 The space of real valued infinitely differentiable functions of compact support is denoted by $\mathcal{D}(\mathbb{R}^d)$ or $\mathcal{D}$. The space of Schwartz functions is denoted by $\mathcal{S}(\mathbb{R}^d)$. Its dual, the space of tempered distributions, is denoted by $\mathcal{S}'(\mathbb{R}^d)$. The Fourier transform and the inverse Fourier transform are denoted by $\mathcal{F}$ and $\mathcal{F}^{-1}$, respectively.

 Let $\chi,\theta\in \mathcal{D}$ be nonnegative radial functions on $\mathbb{R}^d$, such that

i. the support of $\chi$ is contained in a ball and the support of $\theta$ is contained in an annulus;

ii. $\chi(z)+\sum_{j\geqslant0}\theta(2^{-j}z)=1$ for all $z\in \mathbb{R}^d$;

iii. $\textrm{supp}(\chi)\cap \textrm{supp}(\theta(2^{-j}\cdot))=\emptyset$ for $j\geqslant1$ and $\textrm{supp}\theta(2^{-i}\cdot)\cap \textrm{supp}\theta(2^{-j}\cdot)=\emptyset$ for $|i-j|>1$.

We call such $(\chi,\theta)$ dyadic partition of unity, and for the existence of dyadic partitions of unity we refer to \cite[Proposition 2.10]{BCD11}. The Littlewood-Paley blocks are now defined as
$$\Delta_{-1}u=\mathcal{F}^{-1}(\chi\mathcal{F}u)\quad \Delta_{j}u=\mathcal{F}^{-1}(\theta(2^{-j}\cdot)\mathcal{F}u).$$

 \vskip.10in

{\textbf{Besov spaces}}

For $\alpha\in\mathbb{R}$, $p,q\in [1,\infty]$, $u\in\mathcal{D}$ we define
$$\|u\|_{B^\alpha_{p,q}}:=(\sum_{j\geqslant-1}(2^{j\alpha}\|\Delta_ju\|_{L^p})^q)^{1/q},$$
with the usual interpretation as $l^\infty$ norm in case $q=\infty$. The Besov space $B^\alpha_{p,q}$ consists of the completion of $\mathcal{D}$ with respect to this norm and the H\"{o}lder-Besov space $\mathcal{C}^\alpha$ is given by $\mathcal{C}^\alpha(\mathbb{R}^d)=B^\alpha_{\infty,\infty}(\mathbb{R}^d)$. For $p,q\in [1,\infty)$,
$$B^\alpha_{p,q}(\mathbb{R}^d)=\{u\in\mathcal{S}'(\mathbb{R}^d):\|u\|_{B^\alpha_{p,q}}<\infty\}.$$
$$\mathcal{C}^\alpha(\mathbb{R}^d)\varsubsetneq \{u\in\mathcal{S}'(\mathbb{R}^d):\|u\|_{\mathcal{C}^\alpha(\mathbb{R}^d)}<\infty\}.$$
We point out that everything above and everything that follows can be applied to distributions on the torus (see \cite{S85}, \cite{SW71}). More precisely, let $\mathcal{S}'(\mathbb{T}^d)$ be the space of distributions on $\mathbb{T}^d$.  Besov spaces on the torus with general indices $p,q\in[1,\infty]$ are defined as
the completion of $C^\infty(\mathbb{T}^d)$ with respect to the norm $$\|u\|_{B^\alpha_{p,q}(\mathbb{T}^d)}:=(\sum_{j\geqslant-1}(2^{j\alpha}\|\Delta_ju\|_{L^p(\mathbb{T}^d)})^q)^{1/q},$$
and the H\"{o}lder-Besov space $\mathcal{C}^\alpha$ is given by $\mathcal{C}^\alpha=B^\alpha_{\infty,\infty}(\mathbb{T}^d)$.  We write $\|\cdot\|_{\alpha}$ instead of $\|\cdot\|_{B^\alpha_{\infty,\infty}(\mathbb{T}^d)}$ in the following for simplicity.  For $p,q\in[1,\infty)$
$$B^\alpha_{p,q}(\mathbb{T}^d)=\{u\in\mathcal{S}'(\mathbb{T}^d):\|u\|_{B^\alpha_{p,q}(\mathbb{T}^d)}<\infty\}.$$
$$ \mathcal{C}^\alpha\varsubsetneq \{u\in\mathcal{S}'(\mathbb{T}^d):\|u\|_{\alpha}<\infty\}.$$

Here we choose Besov spaces as  completions of smooth functions, which ensures that the Besov spaces are separable which has a lot of advantages for our analysis below.

In this paper, we use the following notations: $C\mathcal{C}^\beta:=C([0,T],\mathcal{C}^\beta)$, $CL^\infty:=C([0,T],L^\infty(\mathbb{T}^d))$.

 \vskip.10in
\textbf{Estimates on the torus}

In this part we give estimates on the torus for later use.
We  will need several important properties of Besov spaces on the torus and we recall the following Besov embedding theorems on the torus first (c.f. \cite[Theorem 4.6.1]{Tri78}, \cite[ Lemma A.2]{GIP15}):
\vskip.10in
\begin{lemma} \label{lemma1}
 Let $1\leq p_1\leq p_2\leq\infty$ and $1\leq q_1\leq q_2\leq\infty$, and let $\alpha\in\mathbb{R}$. Then $B^\alpha_{p_1,q_1}(\mathbb{T}^d)$ is continuously embedded in $B^{\alpha-d(1/p_1-1/p_2)}_{p_2,q_2}(\mathbb{T}^d)$.

\end{lemma}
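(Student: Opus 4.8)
The plan is to reduce the embedding to a Bernstein-type inequality on each Littlewood--Paley block and then sum. Since $B^\alpha_{p,q}(\mathbb{T}^d)$ is by definition the completion of $C^\infty(\mathbb{T}^d)$ in the relevant norm, I would first establish the norm bound $\|u\|_{B^{\alpha-d(1/p_1-1/p_2)}_{p_2,q_2}}\le C\|u\|_{B^\alpha_{p_1,q_1}}$ for $u\in C^\infty(\mathbb{T}^d)$ and then extend to the completion by continuity. The passage $q_1\le q_2$ will be dealt with at the end using only the elementary inclusion $\ell^{q_1}\hookrightarrow\ell^{q_2}$, so the real content is the case $q_1=q_2=q$ together with the gain in integrability exponent coming from $p_1\le p_2$.

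For the latter, the key estimate I would prove is that for all $j\ge-1$,
$$\|\Delta_j u\|_{L^{p_2}(\mathbb{T}^d)}\le C\,2^{jd(1/p_1-1/p_2)}\|\Delta_j u\|_{L^{p_1}(\mathbb{T}^d)}$$
with $C$ independent of $j$ and $u$. To get this, I would use that $\widehat{\Delta_j u}$ is supported in an annulus $\{|k|\sim 2^j\}$ (a ball when $j=-1$), pick a smooth compactly supported $\tilde\theta$ that is identically $1$ there --- e.g. $\tilde\theta(2^{-j}\cdot)=\theta(2^{-(j-1)}\cdot)+\theta(2^{-j}\cdot)+\theta(2^{-(j+1)}\cdot)$, which works for $j$ large by the support and partition conditions (ii)--(iii) --- and write the reproduction identity $\Delta_j u=\phi_j*\Delta_j u$ on $\mathbb{T}^d$ with $\phi_j(x)=\sum_{k\in\mathbb{Z}^d}\tilde\theta(2^{-j}k)e^{ik\cdot x}$. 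Young's inequality on the torus then gives $\|\Delta_j u\|_{L^{p_2}}\le\|\phi_j\|_{L^r}\|\Delta_j u\|_{L^{p_1}}$ with $1/r=1-(1/p_1-1/p_2)$, and it remains to bound $\|\phi_j\|_{L^r(\mathbb{T}^d)}\lesssim 2^{jd(1-1/r)}=2^{jd(1/p_1-1/p_2)}$. The finitely many small $j$ (here essentially $j=-1$) are harmless since $\phi_j$ is then a fixed smooth function; for $j\ge0$ I would use Poisson summation to realize $\phi_j$ as the $2\pi\mathbb{Z}^d$-periodization of $\Phi_j:=\mathcal{F}^{-1}_{\mathbb{R}^d}(\tilde\theta(2^{-j}\cdot))=2^{jd}\Phi_0(2^j\cdot)$, and since $\Phi_0$ is Schwartz the periodization tails are negligible uniformly in $j$, so $\|\phi_j\|_{L^r(\mathbb{T}^d)}\lesssim\|\Phi_j\|_{L^r(\mathbb{R}^d)}=2^{jd(1-1/r)}\|\Phi_0\|_{L^r(\mathbb{R}^d)}$ by scaling.

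With the block estimate in hand, the conclusion is a short computation: for $q<\infty$,
$$\|u\|_{B^{\alpha-d(1/p_1-1/p_2)}_{p_2,q}}^q=\sum_{j\ge-1}\big(2^{j(\alpha-d(1/p_1-1/p_2))}\|\Delta_j u\|_{L^{p_2}}\big)^q\le C^q\sum_{j\ge-1}\big(2^{j\alpha}\|\Delta_j u\|_{L^{p_1}}\big)^q=C^q\|u\|_{B^\alpha_{p_1,q}}^q,$$
the factors $2^{\pm jd(1/p_1-1/p_2)}$ cancelling exactly, and the case $q=\infty$ is identical with $\sup$ replacing $\sum$. Then $\ell^{q_1}\hookrightarrow\ell^{q_2}$ gives $\|u\|_{B^\alpha_{p_1,q_2}}\le\|u\|_{B^\alpha_{p_1,q_1}}$, and composing the two bounds yields the claim. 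I expect the only genuinely delicate step to be the uniform kernel bound $\|\phi_j\|_{L^r(\mathbb{T}^d)}\lesssim 2^{jd(1-1/r)}$, i.e. controlling the periodization of the Schwartz kernel and treating the low-frequency blocks separately; the rest is bookkeeping. Alternatively one may simply invoke \cite[Theorem 4.6.1]{Tri78} or \cite[Lemma A.2]{GIP15}.
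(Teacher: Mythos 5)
Your proof is correct and is essentially the standard argument behind the references the paper cites for this lemma (Triebel, Theorem 4.6.1, and Gubinelli--Imkeller--Perkowski, Lemma A.2): Bernstein's inequality on each Littlewood--Paley block, obtained from a reproducing kernel plus Young's convolution inequality and a uniform kernel bound via Poisson summation, followed by the monotonicity of $\ell^q$ norms and extension to the completion. The paper itself gives no proof, only the citation, so there is nothing further to compare against.
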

We recall the following Schauder estimates, i.e. the smoothing effect of the heat flow, for later use.

\vskip.10in
\begin{lemma}[{\cite[Lemma A.7]{GIP15}}]\label{lemma2}
 Let $u\in \mathcal{C}^\alpha$ for some $\alpha\in \mathbb{R}$. Then for every $\delta\geqslant 0$, there exists a constant $C$ independent of $u$ such that
$$\|e^{t\Delta}u\|_{\alpha+\delta}\leqslant Ct^{-\delta/2}\|u\|_{\alpha}.$$
\end{lemma}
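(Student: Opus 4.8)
The plan is to run the standard Littlewood--Paley argument. Since $e^{t\Delta}$ and the Littlewood--Paley projectors $\Delta_j$ are both Fourier multipliers they commute, so by the definition of the $\mathcal{C}^{\alpha+\delta}$ norm,
$$\|e^{t\Delta}u\|_{\alpha+\delta}=\sup_{j\geqslant-1}2^{j(\alpha+\delta)}\|e^{t\Delta}\Delta_ju\|_{L^\infty}.$$
The heart of the proof is a block-wise smoothing estimate: for every $j\geqslant0$ there are constants $c,C>0$, depending only on the fixed dyadic partition of unity (and not on $u$, $t$ or $j$), such that
$$\|e^{t\Delta}\Delta_ju\|_{L^\infty}\leqslant Ce^{-ct2^{2j}}\|\Delta_ju\|_{L^\infty},$$
together with the trivial bound $\|e^{t\Delta}\Delta_{-1}u\|_{L^\infty}\leqslant C\|\Delta_{-1}u\|_{L^\infty}$ for the low-frequency piece. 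Granting these, I multiply by $2^{j(\alpha+\delta)}=2^{j\delta}\cdot2^{j\alpha}$, bound $2^{j\alpha}\|\Delta_ju\|_{L^\infty}\leqslant\|u\|_\alpha$, and am left to control $\sup_{j\geqslant0}2^{j\delta}e^{-ct2^{2j}}$. Substituting $y=ct2^{2j}$ gives $2^{j\delta}e^{-ct2^{2j}}=(ct)^{-\delta/2}y^{\delta/2}e^{-y}\leqslant(ct)^{-\delta/2}\sup_{y>0}y^{\delta/2}e^{-y}=C_\delta t^{-\delta/2}$, using the elementary fact $\sup_{y>0}y^ae^{-y}=(a/e)^a<\infty$ for $a\geqslant0$. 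For the $j=-1$ term, on the bounded time interval $(0,T]$ one has $1\leqslant T^{\delta/2}t^{-\delta/2}$, so this contributes $C_T t^{-\delta/2}\|u\|_\alpha$ as well. Taking the supremum over $j$ finishes the argument.

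The one substantive point is the block-wise estimate, and I would prove it as follows. Pick a smooth radial $\tilde\theta$ supported in an annulus with $\tilde\theta\equiv1$ on $\mathrm{supp}\,\theta$, so that $\Delta_ju=\mathcal{F}^{-1}\big(\tilde\theta(2^{-j}\cdot)\big)\ast\Delta_ju$; working on $\mathbb{R}^d$ first and then transferring to $\mathbb{T}^d$ by periodization/Poisson summation (or by the transference principle, as in the references recalled above). Then $e^{t\Delta}\Delta_ju=K_{t,j}\ast\Delta_ju$ with $\widehat{K_{t,j}}(\xi)=e^{-t|\xi|^2}\tilde\theta(2^{-j}\xi)$, and by Young's inequality it suffices to show $\|K_{t,j}\|_{L^1}\leqslant Ce^{-ct2^{2j}}$. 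Rescaling $\xi\mapsto2^j\xi$ writes $K_{t,j}(x)=2^{jd}K(2^jx)$ with $\widehat K(\xi)=e^{-t2^{2j}|\xi|^2}\tilde\theta(\xi)$; since $\tilde\theta$ is supported where $|\xi|\sim1$ we have $e^{-t2^{2j}|\xi|^2}\leqslant e^{-ct2^{2j}}$ there, and after pulling this factor out the remaining symbol $e^{ct2^{2j}-t2^{2j}|\xi|^2}\tilde\theta(\xi)$ is a Schwartz function whose seminorms are bounded uniformly in $t$ and $j$, so its inverse Fourier transform has $L^1$ norm $\leqslant C$ uniformly. Dilation invariance of the $L^1$ norm then gives $\|K_{t,j}\|_{L^1}=\|K\|_{L^1}\leqslant Ce^{-ct2^{2j}}$. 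The low-frequency bound is the same computation with the symbol $e^{-t|\xi|^2}\chi(\xi)$, which has $L^1$-bounded inverse transform uniformly for $t\in(0,T]$ since it converges to $\mathcal{F}^{-1}\chi$ as $t\to0$.

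I expect the main obstacle to be exactly this uniform-in-$(t,j)$ control of the rescaled kernel: after extracting the exponential gain $e^{-ct2^{2j}}$, one must check the leftover symbol's Schwartz seminorms do not blow up, which is where the annulus support of $\theta$ is essential and explains why the block $j=-1$ genuinely behaves differently and produces no smoothing. The passage from $\mathbb{R}^d$ to $\mathbb{T}^d$ is routine given the references already cited, so I would either cite it directly or add a one-line Poisson-summation remark rather than reproduce it.
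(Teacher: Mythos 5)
Your proof is correct and is essentially the standard Littlewood--Paley argument given in the cited reference \cite[Lemma A.7]{GIP15} (the paper itself only quotes the lemma without proof): commute $e^{t\Delta}$ with the blocks, extract the gain $e^{-ct2^{2j}}$ from the rescaled kernel on each annulus, and optimize $\sup_{y>0}y^{\delta/2}e^{-y}$. You also correctly identify the one subtlety the bare statement hides, namely that the $j=-1$ block forces the constant to depend on the length of the time interval, which is harmless here since $t$ ranges over $[0,T]$ throughout the paper.
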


\section{Large deviation principle and some preparations}

\subsection{Large deviation principle}
We recall the definition of the large deviation principle. For a general introduction to the theory we refer to \cite{DZ92}, \cite{DZ98}.

\begin{definition}
Given a family of probability measures $\{\mu_\varepsilon\}_{\varepsilon>0}$ on a complete separable metric space $(E,\rho)$ and a lower semicontinuous function $I:E\rightarrow [0,\infty]$, not identically equal to $+\infty$. The family $\{\mu_\varepsilon\}$ is said to satisfy the large deviation principle(LDP) with respect to the rate function $I$ if\\
(U) for all closed sets $F\subset E$ we have 
$$\limsup_{\varepsilon\rightarrow 0}\varepsilon\log\mu_\varepsilon(F)\leqslant-\inf_{x\in F}I(x),$$
(L) for all open sets $G\subset E$ we have 
$$\liminf_{\varepsilon\rightarrow 0}\varepsilon\log\mu_\varepsilon(G)\geqslant-\inf_{x\in G}I(x).$$

A family of random variable is said to satisfy large deviation principle if the law of these random variables satisfy large deviation princple.

Moreover, $I$ is a good rate function if its level sets $I_r:=\{x\in E:I(x)\leqslant r\}$ are compact for arbitrary $r\in (0,+\infty)$.
\end{definition}
 \vskip.10in

Given a probabilty space $(\Omega,\mathcal{F},P)$, the random variables $\{Z_\varepsilon\}$ and $\{\overline{Z}_\varepsilon\}$ which take values in $(E,\rho)$ are called exponentially equivalent if for each $\delta>0$, 
$$\lim_{\varepsilon\rightarrow 0}\varepsilon\log P(\rho(Z_\varepsilon,\overline{Z}_\varepsilon)>\delta)=-\infty.$$
\vskip.10in
\begin{lemma}[{\cite[Theorem 4.2.13]{DZ98}}]\label{EXEQ}
If an LDP with a rate function $I(\cdot)$ holds for the random variables $\{Z_\varepsilon\}$, which are exponentially equivalent to $\{\overline{Z}_\varepsilon\}$, then the same LDP holds for $\{\overline{Z}_\varepsilon\}$.
\end{lemma}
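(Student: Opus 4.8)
The plan is to push the two large deviation bounds from $\{Z_\varepsilon\}$ to $\{\overline{Z}_\varepsilon\}$ by fattening sets into $\delta$-neighbourhoods and absorbing the mismatch into the super-exponentially negligible event $\{\rho(Z_\varepsilon,\overline{Z}_\varepsilon)>\delta\}$. Write $\mu_\varepsilon:=P\circ Z_\varepsilon^{-1}$ and $\overline{\mu}_\varepsilon:=P\circ\overline{Z}_\varepsilon^{-1}$, and for a set $A\subset E$ and $\delta>0$ let $A^\delta:=\{x\in E:\rho(x,A)\leqslant\delta\}$. I will repeatedly use the elementary ``principle of the largest term'': for nonnegative $a_\varepsilon,b_\varepsilon$ one has $\limsup_{\varepsilon\to0}\varepsilon\log(a_\varepsilon+b_\varepsilon)=\max\big(\limsup_{\varepsilon\to0}\varepsilon\log a_\varepsilon,\ \limsup_{\varepsilon\to0}\varepsilon\log b_\varepsilon\big)$, together with the remark that a term with $\varepsilon\log b_\varepsilon\to-\infty$ may be dropped from a difference $a_\varepsilon-b_\varepsilon$ without changing $\liminf_{\varepsilon\to0}\varepsilon\log(\cdot)$ as soon as $\liminf_{\varepsilon\to0}\varepsilon\log a_\varepsilon>-\infty$.

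\emph{Upper bound.} Fix a closed set $F\subset E$ and $\delta>0$. From $\{\overline{Z}_\varepsilon\in F\}\subset\{Z_\varepsilon\in F^\delta\}\cup\{\rho(Z_\varepsilon,\overline{Z}_\varepsilon)>\delta\}$ we get $\overline{\mu}_\varepsilon(F)\leqslant\mu_\varepsilon(F^\delta)+P(\rho(Z_\varepsilon,\overline{Z}_\varepsilon)>\delta)$. Applying $\varepsilon\log$, taking $\limsup_{\varepsilon\to0}$, and using the LDP upper bound for $\{Z_\varepsilon\}$ on the closed set $F^\delta$ together with exponential equivalence,
$$\limsup_{\varepsilon\to0}\varepsilon\log\overline{\mu}_\varepsilon(F)\leqslant\max\Big(-\inf_{x\in F^\delta}I(x),\,-\infty\Big)=-\inf_{x\in F^\delta}I(x).$$
It remains to let $\delta\downarrow0$ and check $\inf_{x\in F^\delta}I(x)\uparrow\inf_{x\in F}I(x)$. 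The left-hand side is nondecreasing as $\delta\downarrow0$ and bounded above by $\inf_FI$, so it has a limit $\ell\leqslant\inf_FI$; if $\ell=+\infty$ then $\inf_FI=+\infty$ and there is nothing to prove, so assume $\ell<\infty$ and pick $x_\delta\in F^\delta$ with $I(x_\delta)\leqslant\inf_{F^\delta}I+\delta$. For $\delta$ small these points lie in $\{I\leqslant\ell+1\}$, which is compact when $I$ is a good rate function; hence along a subsequence $x_{\delta_k}\to x_\ast$, and then $\rho(x_\ast,F)=\lim_k\rho(x_{\delta_k},F)=0$, so $x_\ast\in F$ as $F$ is closed, while $I(x_\ast)\leqslant\liminf_kI(x_{\delta_k})\leqslant\ell$ by lower semicontinuity. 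Therefore $\ell=\inf_FI$, which is (U) for $\{\overline{Z}_\varepsilon\}$.

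\emph{Lower bound.} Fix an open set $G\subset E$ and a point $x\in G$ with $I(x)<\infty$ (points with $I(x)=\infty$ do not affect $\inf_GI$), and choose $\delta>0$ with the open ball $B(x,2\delta)\subset G$. From $\{Z_\varepsilon\in B(x,\delta)\}\subset\{\overline{Z}_\varepsilon\in B(x,2\delta)\}\cup\{\rho(Z_\varepsilon,\overline{Z}_\varepsilon)>\delta\}$ we obtain $\overline{\mu}_\varepsilon(G)\geqslant\overline{\mu}_\varepsilon(B(x,2\delta))\geqslant\mu_\varepsilon(B(x,\delta))-P(\rho(Z_\varepsilon,\overline{Z}_\varepsilon)>\delta)$. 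The LDP lower bound for $\{Z_\varepsilon\}$ gives $\liminf_{\varepsilon\to0}\varepsilon\log\mu_\varepsilon(B(x,\delta))\geqslant-\inf_{B(x,\delta)}I\geqslant-I(x)>-\infty$, while $\varepsilon\log P(\rho(Z_\varepsilon,\overline{Z}_\varepsilon)>\delta)\to-\infty$; hence the subtracted term is negligible and $\liminf_{\varepsilon\to0}\varepsilon\log\overline{\mu}_\varepsilon(G)\geqslant-I(x)$. Taking the supremum over $x\in G$ yields (L) for $\{\overline{Z}_\varepsilon\}$.

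\emph{Main obstacle.} The lower bound and the neighbourhood/largest-term bookkeeping are routine; the one genuinely delicate point is the passage $\delta\downarrow0$ in the upper bound, i.e.\ the identity $\lim_{\delta\downarrow0}\inf_{F^\delta}I=\inf_FI$, where compactness of the level sets of $I$ together with lower semicontinuity is used (for a general lower semicontinuous $I$ this is precisely the content packaged in the cited \cite[Theorem 4.2.13]{DZ98}). In our applications the relevant rate function is of Varadhan type and hence good, so the argument above applies directly.
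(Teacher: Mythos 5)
Your proof is correct, and it is essentially the standard argument behind the cited result: the paper itself gives no proof of this lemma, quoting it directly from Dembo--Zeitouni, and your neighbourhood/largest-term argument is precisely the proof of \cite[Theorem 4.2.13]{DZ98}. Both halves check out: the inclusion $\{\overline{Z}_\varepsilon\in F\}\subset\{Z_\varepsilon\in F^\delta\}\cup\{\rho(Z_\varepsilon,\overline{Z}_\varepsilon)>\delta\}$ and its mirror image for balls are right, and your handling of the subtracted super-exponentially small term in the lower bound is the correct way to justify dropping it. One remark worth making explicit: the lemma as stated in the paper omits the word \emph{good}, but your upper bound genuinely uses compactness of the level sets in the passage $\inf_{F^\delta}I\uparrow\inf_F I$ (and some such hypothesis is unavoidable --- for a merely lower semicontinuous $I$ with non-compact level sets one can have $\inf_{F^\delta}I=0$ for all $\delta$ while $\inf_F I>0$). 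This matches the actual hypothesis of \cite[Theorem 4.2.13]{DZ98}, and it is harmless here since the rate function $I^{z_0}$ is shown to be good in Theorem \ref{weak result}, so you correctly identified both the delicate step and why it causes no trouble in the paper's applications.
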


\subsection{Small time asymptotics in the linear case}

In this subsection we concentrate on  the following linear equations on the torus $\mathbb{T}$: 

\begin{equation}\label{linear}\aligned
&dZ_\varepsilon(t)=\varepsilon \Delta Z_\varepsilon(t)dt+\sqrt{\varepsilon}dW(t),\\
&Z_\varepsilon(0)=z_0.\\
\endaligned
\end{equation}
where $W(t)$ is an $L^2(\mathbb{T})$ cylindrical Wiener process and $z_0\in \mathcal{C}^{-\beta}$ for $0<\beta<\frac{1}{4}$.  We will prove that the solutions to ($\ref{linear}$) satisfy a large deviation principle.
\vskip.10in

The mild solutions to $(\ref{linear})$ are given by

$$Z_\varepsilon(t)=e^{\varepsilon t\Delta}z_0+\sqrt{\varepsilon}\int^t_0e^{\varepsilon(t-s)\Delta}dW(s).$$

\begin{Thm}\label{weak result}
Assume $z_0\in\mathcal{C}^{-\beta}$ for $0<\beta<\frac{1}{4}$. Let $\mu_{\varepsilon, z_0}=\mathcal{L}(Z_\varepsilon(\cdot))$ and $\alpha>0$ small enough. Define a functional $I$ on $C\mathcal{C}^{-\frac{1}{2}-\alpha}$ by  
$$I^{z_0}(g)=\inf_{h\in \Gamma_g}\{\frac{1}{2}\int^T_0\|h'(t)\|^2_{L^2(\mathbb{T})}dt\},$$
where 
$$\Gamma_g=\{h\in C\mathcal{C}^{-\frac{1}{2}-\alpha}: h(\cdot)\text{ } \text{is}\text{ } \text{absolutely} \text{ }\text{continuous},\text{ } g(t)=z_0+\int^t_0h'(s)ds\}.$$

 Then $\mu_{\varepsilon,z_0}$ satisfies a large deviation principle with the rate function $I^{z_0}(\cdot)$.
 
 Moreover, $I^{z_0}$ is a good rate function.
\end{Thm}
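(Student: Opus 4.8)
The plan is to reduce the statement to a classical Schilder-type large deviation principle for the Gaussian field $W^\varepsilon := \sqrt{\varepsilon}\, W(\cdot)$ and then transfer it through a continuous map. First I would fix the target space $E = C\mathcal{C}^{-\frac12-\alpha} = C([0,T],\mathcal{C}^{-\frac12-\alpha}(\mathbb{T}))$, which by the choice of Besov spaces as completions of smooth functions is a complete separable metric space, so the framework of the Definition applies. The key regularity input is that the stochastic convolution $\int_0^t e^{\varepsilon(t-s)\Delta}dW(s)$ lives, uniformly in $\varepsilon \le 1$, in $C\mathcal{C}^{-\frac12-\alpha}$ with Gaussian tails: a standard Kolmogorov/hypercontractivity computation on Littlewood–Paley blocks (using $\mathbb{E}\|\Delta_j Z\|_{L^p}^2 \lesssim \sum_{|k|\sim 2^j}\int_0^t e^{-2\varepsilon(t-s)|k|^2}ds \lesssim 2^j$ and that $\varepsilon\le 1$ only helps) gives the embedding, and $z_0\in\mathcal{C}^{-\beta}$ with $\beta<\frac14<\frac12$ handles the deterministic part since $\|e^{\varepsilon t\Delta}z_0\|_{-\frac12-\alpha}\le\|z_0\|_{-\beta}$.

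Next I would set up the LDP for $\{W^\varepsilon\}$ itself. Realising $W$ as a Brownian motion in a suitable Hilbert space $\mathcal{H}\supset L^2(\mathbb{T})$ (e.g. $\mathcal{H}=\mathcal{C}^{-\frac12-\alpha}$ after smoothing, or any negative Sobolev space into which the white-noise path embeds continuously), Schilder's theorem yields that $\{\sqrt\varepsilon W\}$ satisfies an LDP on $C([0,T],\mathcal{H})$ with good rate function $\tilde I(h)=\frac12\int_0^T\|h'(t)\|_{L^2(\mathbb{T})}^2dt$ if $h$ is absolutely continuous with $L^2$-derivative and $\tilde I(h)=+\infty$ otherwise. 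Then observe that the solution map is affine and continuous: $Z_\varepsilon = \Phi_\varepsilon(W^\varepsilon)$ where $\Phi_\varepsilon(w)(t) = e^{\varepsilon t\Delta}z_0 + \int_0^t e^{\varepsilon(t-s)\Delta}\,dw(s)$ (the stochastic integral read as a deterministic continuous linear operation on paths after integration by parts, $\int_0^t e^{\varepsilon(t-s)\Delta}dw(s) = w(t) + \varepsilon\Delta\int_0^t e^{\varepsilon(t-s)\Delta}w(s)ds$). Here comes the main point: $\Phi_\varepsilon$ depends on $\varepsilon$, so the plain contraction principle does not apply directly; instead I would prove that $\Phi_\varepsilon(W^\varepsilon)$ is \emph{exponentially equivalent} in $E$ to $\Phi_0(W^\varepsilon) = z_0 + W^\varepsilon$, i.e. to the map with $\varepsilon=0$ in the semigroup. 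This requires showing $\lim_{\varepsilon\to0}\varepsilon\log P(\|Z_\varepsilon - (z_0+\sqrt\varepsilon W)\|_{C\mathcal{C}^{-\frac12-\alpha}}>\delta)=-\infty$; the difference is $(e^{\varepsilon t\Delta}-I)z_0 + \sqrt\varepsilon\int_0^t(e^{\varepsilon(t-s)\Delta}-I)dW(s)$, the first term tends to $0$ deterministically in $\mathcal{C}^{-\frac12-\alpha}$ (interpolating the $\mathcal{C}^{-\beta}$ bound with the Schauder estimate of Lemma 2.2, using $\beta<\frac12$), and the second is a Gaussian random variable whose variance one bounds, via the spectral gain $|e^{-\varepsilon(t-s)|k|^2}-1|\le (\varepsilon(t-s)|k|^2)^{\gamma}$ for small $\gamma$, by $C\varepsilon^{\gamma}$ in a slightly worse Hölder norm that still embeds in $\mathcal{C}^{-\frac12-\alpha}$ (this is exactly why one must work at regularity $-\frac12-\alpha$ and not better — cf. the remark in the Introduction). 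Then a Gaussian concentration / Fernique-type tail estimate turns the $\varepsilon^\gamma$-smallness of the variance into the required super-exponential decay.

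Finally, by Lemma 3.1 (exponential equivalence preserves LDP) and the ordinary contraction principle applied to the fixed continuous affine map $w\mapsto z_0+w$ on $C([0,T],\mathcal{C}^{-\frac12-\alpha})$, $\{\mu_{\varepsilon,z_0}\}$ satisfies an LDP with rate function $I^{z_0}(g) = \tilde I(g - z_0(\cdot))$ where $z_0(\cdot)$ denotes the constant path; unwinding the definition of $\tilde I$ gives precisely $I^{z_0}(g)=\inf_{h\in\Gamma_g}\frac12\int_0^T\|h'(t)\|_{L^2(\mathbb{T})}^2dt$. Goodness of $I^{z_0}$ follows because $\tilde I$ is a good rate function on $C([0,T],L^2)$ (its sublevel sets are sets of absolutely continuous paths with bounded energy, which are compact in the sup norm by Arzelà–Ascoli and then compact in the weaker topology $C\mathcal{C}^{-\frac12-\alpha}$ by continuity of the embedding $L^2\hookrightarrow\mathcal{C}^{-\frac12-\alpha}$), and translation by the fixed path $z_0(\cdot)$ preserves compactness. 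The main obstacle is the exponential-equivalence step: one must carefully track how much regularity is lost when replacing $e^{\varepsilon(t-s)\Delta}$ by the identity and verify that the loss is still absorbed by the space $\mathcal{C}^{-\frac12-\alpha}$, uniformly in $\varepsilon\le1$, and that the resulting Gaussian estimate is strong enough (quadratic-exponential, not merely polynomial) to give $-\infty$ in the limit.
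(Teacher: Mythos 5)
Your proposal follows essentially the same route as the paper: the abstract Gaussian (Schilder-type) LDP for $z_0+\sqrt{\varepsilon}W$ on $C\mathcal{C}^{-\frac{1}{2}-\alpha}$, exponential equivalence of $Z_\varepsilon$ with that process by exploiting the smallness of $e^{\varepsilon(t-s)\Delta}-I$ on both the initial datum and the noise, and Gaussian-type ($p^{1/2}$-moment or Fernique) tails combined with Chebyshev to get the superexponential decay; your representation of the difference via $(e^{\varepsilon(t-s)\Delta}-I)$ is the same computation as the paper's mild formulation $w_\varepsilon(t)=\varepsilon\int_0^te^{\varepsilon(t-s)\Delta}\Delta x_\varepsilon(s)\,ds$ after integration by parts. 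One point in your sketch is wrong as stated: the sublevel sets of $\tilde I$ are \emph{not} compact in $C([0,T],L^2)$ (closed balls of $L^2(\mathbb{T})$ are not compact, so Arzel\`a--Ascoli does not apply there), and mere continuity of $L^2\hookrightarrow\mathcal{C}^{-\frac{1}{2}-\alpha}$ would not transfer compactness you do not have; what is actually needed, and what the paper uses, is the \emph{compactness} of this embedding to get pointwise relative compactness in $\mathcal{C}^{-\frac{1}{2}-\alpha}$, which together with the H\"older equicontinuity of bounded-energy paths yields compactness in $C\mathcal{C}^{-\frac{1}{2}-\alpha}$ by the generalized Arzel\`a--Ascoli theorem.
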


\begin{proof}

Let $x_\varepsilon$ be the solution to the stochastic equation
$$x_\varepsilon(t)=z_0+\sqrt{\varepsilon}\int^t_0dW(s).$$

Since $x_\varepsilon$ is Gaussian on $C\mathcal{C}^{-\frac{1}{2}-\alpha}$, by \cite[Theorem 12.9]{DZ92}, we know that $x_{\varepsilon}-z_0$ satisfy a large deviation principle with the rate function $I^{0}$.  Combing the deterministic initial data, we deduce that $x_\varepsilon$ satisfy a large deviation principle with the rate function $I^{z_0}$.

Now we  prove that $I^{z_0}$ is a good rate function. Consider the level set for $r\in (0,\infty)$ $$I^{z_0}_r=\{g\in C\mathcal{C}^{-\frac{1}{2}-\alpha}: I^{z_0}(g)\leqslant r\}.$$ 
For any $g\in I^{z_0}_r$, we have for $s,t\in[0, T]$
$$\|g(t)-g(s)\|_{-\frac{1}{2}-\alpha}\leqslant C\|g(t)-g(s)\|_{L^2(\mathbb{T})}\leqslant C\int^t_s\|g'(l)\|_{L^2(\mathbb{T})}dl\leqslant C(2r)^\frac{1}{2}|t-s|^\frac{1}{2},$$
where we use Lemma \ref{lemma1} in the first inequality and H\"older's inequality in the last inequality. Since the constant $C$ does not depend on $g$,  $I_r^{z_0}$ is equicontinuous. 
For each $t\in[0,T]$, let $I^{z_0}_{r,t}:=\{g(t),g\in   I^{z_0}_r\}$. For any $a\in I^{z_0}_{r,t}$, there exists $g\in I^{z_0}_r$ such that $a=g(t)$. Then H\"older's inequality implies
$$\|a-z_0\|_{L^2(\mathbb{T})}=\|g(t)-g(0)\|_{L^2(\mathbb{T})}\leqslant Cr^\frac{1}{2}.$$
Thus $I^{z_0}_{r,t}$ is contained in a ball $B_{L^2}(z_0, Cr^\frac{1}{2})$. By \cite[Proposition 4.6]{Tri06}, the embedding $L^2(\mathbb{T}) \hookrightarrow \mathcal{C}^{-\frac{1}{2}-\alpha}$ is compact, which implies that $I^{z_0}_{r,t}$ is relatively compact in $\mathcal{C}^{-\frac{1}{2}-\alpha}$ for any $t$.  Then the generalized Aerel\`a-Ascoli theorem implies that $I_r^{z_0}$ is compact, i.e., $I^{z_0}$ is a good rate function.

By Lemma $\ref{EXEQ}$, the task remain is to show that $Z_{\varepsilon}$ and $x_{\varepsilon}$ are exponentially equivalent, that is, for any $\delta>0$, 
$$\lim_{\varepsilon\rightarrow 0}\varepsilon\log P(\sup_{0\leqslant t\leqslant T}\|Z_\varepsilon(t)-x_\varepsilon(t)\|_{{-\frac{1}{2}-\alpha}}>\delta)=-\infty.$$

Let $w_\varepsilon=Z_\varepsilon-x_\varepsilon$, we have 
$$\frac{d}{dt}w_\varepsilon(t)=\varepsilon\Delta w_\varepsilon(t)+\varepsilon\Delta x_\varepsilon(t), \text{ } \text{ }w_\varepsilon(0)=0.$$

The mild formulation of $w_\varepsilon$ is given by 
$$\aligned
w_\varepsilon(t)&=\varepsilon\int^t_0e^{\varepsilon(t-s)\Delta}\Delta x_\varepsilon(s)ds\\
    &=\varepsilon\int^t_0e^{\varepsilon(t-s)\Delta}\Delta z_0ds +\varepsilon\sqrt{\varepsilon}\int^t_0e^{\varepsilon(t-s)\Delta}\Delta W(s)ds.\\
\endaligned$$

Now we estimate every term in the second line. By Lemma \ref{lemma2}, we have 
$$\aligned
\sup_{0\leqslant t\leqslant T}\|\varepsilon\int^t_0e^{\varepsilon(t-s)\Delta}\Delta z_0ds\|_{-\frac{1}{2}-\alpha}&\leqslant \sup_{0\leqslant t\leqslant T}C\varepsilon\int^t_0 \frac{1}{[\varepsilon(t-s)]^{\frac{3}{4}-\frac{\alpha-\beta}{2}}}\|\Delta z_0\|_{-2-\beta}ds\\
&\leqslant C\varepsilon^{\frac{1}{4}+\frac{\alpha-\beta}{2}}\|z_0\|_{-\beta}.
\endaligned$$

Similarly, we have for $0<\kappa_1<\frac{\alpha}{2}$, 
\begin{align*}
\sup_{0\leqslant t\leqslant T}\|\varepsilon\sqrt{\varepsilon}\int^t_0e^{\varepsilon(t-s)\Delta}\Delta W(s)ds\|_{-\frac{1}{2}-\alpha}&\leqslant  \sup_{0\leqslant t\leqslant T}C\varepsilon\sqrt{\varepsilon}\int^t_0 \frac{1}{[\varepsilon(t-s)]^{1-\kappa_1}}\|\Delta W(s)\|_{-\frac{5}{2}-\alpha+2\kappa_1}ds\\
  &\leqslant C \sqrt{\varepsilon}\varepsilon^{\kappa_1}\sup_{0\leqslant t\leqslant T}\|W(t)\|_{-\frac{1}{2}-\alpha+2\kappa_1}.
\end{align*}

We should point out that the constant $C$ above is independent of $\varepsilon$ and may change from line to line.

For the cylindrical Wiener process $W$,  we have for $s,t \in[0, T]$, $0<\kappa_1<\frac{\alpha}{3}$

\begin{align*}
E|\triangle_j(W(t)-W(s))|^2=E|\sum_{k\in\mathbb{Z}}\theta_j(k)e_k\langle W(t)-W(s), e_k\rangle|^2\\
\leqslant C|t-s|(1+\sum_{k\in\mathbb{Z}\setminus\{0\}}\frac{2^{j(1+2\alpha-6\kappa_1)}}{|k|^{1+2\alpha-6\kappa_1}})\leqslant C|t-s|2^{j(1+2\alpha-6\kappa_1)},
\end{align*}
where $e_k=2^{-\frac{1}{2}}e^{i\pi kx}$ and we use $k\in\text{supp}\theta_j\subset 2^j\mathcal{A}$($\mathcal{A}$ is an annulus).

By Nelson’s hypercontractive estimate in  \cite{Nel73}, for $p>2$, there exists a constant $C$ independent of $p$ such that
$$\aligned
E\|\triangle_j (W(t)-W(s))\|^p_{L^p(\mathbb{T})}&=\int E|\triangle_j (W(t)-W(s))|^p(x)dx\\
&\leqslant C^pp^{\frac{p}{2}}\int (E|\triangle_j (W(t)-W(s))|^2(x))^{\frac{p}{2}}dx.
\endaligned$$

Then we obtain for $\frac{1}{p}<\kappa_1$
$$E\|W(t)-W(s)\|^p_{B_{p,p}^{-\frac{1}{2}-\alpha+2\kappa_1+\frac{1}{p}}(\mathbb{T})}\leqslant C^p|t-s|^{\frac{p}{2}}p^{\frac{p}{2}}\sum_{j\geqslant -1}2^{j(-\kappa_1+\frac{1}{p}) p}.$$

Thus Lemma \ref{lemma1} and Kolmogorov's continuity criterion imply that for $p>\frac{1}{\kappa_1}$
$$(E[\sup_{0\leqslant t\leqslant T}\|W\|^p_{-\frac{1}{2}-\alpha+2\kappa_1}])^{\frac{1}{p}}\leqslant C(E[\sup_{0\leqslant t\leqslant T}\|W\|^p_{B^{-\frac{1}{2}-\alpha+2\kappa_1+\frac{1}{p}}_{p,p}(\mathbb{T})}])^{\frac{1}{p}}\leqslant Cp^\frac{1}{2}.$$

Hence, with the above estimates in hand, we have

\begin{align*}
(E\sup_{0\leqslant t\leqslant T}\|w_\varepsilon(t)\|^{p}_{-\frac{1}{2}-\alpha})^\frac{1}{p}&\leqslant C\varepsilon^{\frac{1}{4}+\frac{\alpha-\beta}{2}}\|z_0\|_{-\beta}+C\sqrt{\varepsilon}\varepsilon^{\kappa_1}(E[\sup_{0\leqslant t\leqslant T}\|W\|_{-\frac{1}{2}-\alpha+2\kappa_1}]^p)^{\frac{1}{p}}\\
&\leqslant C\varepsilon^{\kappa_1}(1+\sqrt{\varepsilon}p^\frac{1}{2}),\\
\end{align*}
where $C$ is the constant independent of $\varepsilon, p$ and may change from line to line.

Therefore Chebyshev's inequality implies that 
\begin{align*}
\varepsilon\log P(\sup_{0\leqslant t\leqslant T}\|w_\varepsilon(t)\|_{{-\frac{1}{2}-\alpha}}>\delta)&\leqslant \varepsilon\log\frac{E\sup_{0\leqslant t\leqslant T}\|w_\varepsilon(t)\|^{p}_{-\frac{1}{2}-\alpha}}{\delta^p}\\
&\leqslant \varepsilon p(\log C\varepsilon^{\kappa_1}(1+ \sqrt{\varepsilon}p^{\frac{1}{2}})-\log\delta).\\
\end{align*}

Let $p=\frac{1}{\varepsilon}$ and $\varepsilon\rightarrow 0$, the proof is finished.
\end{proof}

 \vskip.10in

\section{Small time asymptotics for $\Phi^4_1$ model} 
In this section  we consider the equation

$$\aligned
  du(t) &= \Delta u(t)dt-u^3(t)dt+dW(t),\\
  u(0)&=u_0,\\
  \endaligned $$
where $u_0\in \mathcal{C}^{-\beta}$ for $0<\beta<\frac{1}{4}$ and $W$ is a cylindrical Wiener process on $L^2(\mathbb{T})$. By a similar argument as \cite[Theorem 4.8]{DP04}, we obtain that the equation  has a unique solution $u\in C\mathcal{C}^{-\beta}$. 

Let $\varepsilon>0$, by the scaling property of the Brownian motion, it is easy to see that $u(\varepsilon t)$ coincides in law with the solution to the following equation:

$$\aligned
  &du_\varepsilon =\varepsilon \Delta u_\varepsilon dt-\varepsilon u_\varepsilon^3dt+\sqrt{\varepsilon}dW,\\
  &u_{\varepsilon}(0)=u_0.\\
  \endaligned $$

Our purpose is to establish a large deviation principle for $u_\varepsilon$. The main result is the following Theorem:

\begin{Thm}\label{1d case}
Assume $u_0\in\mathcal{C}^{-\beta}$ for $0<\beta<\frac{1}{4}$ and $\alpha>0$ small enough, then $u_\varepsilon$ satisfies LDP on $C\mathcal{C}^{-\frac{1}{2}-\alpha}$ with the good rate function $I^{u_0}$, where $I^{u_0}$ is given in Theorem \ref{weak result}.
\end{Thm}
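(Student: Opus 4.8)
The plan is to exploit Lemma \ref{EXEQ} again, now showing that $u_\varepsilon$ is exponentially equivalent (on $C\mathcal{C}^{-\frac12-\alpha}$) to the linear process $Z_\varepsilon$ from Theorem \ref{weak result}, which already satisfies the LDP with the good rate function $I^{u_0}$. Thus the whole task reduces to the estimate
$$\lim_{\varepsilon\to 0}\varepsilon\log P\Big(\sup_{0\leqslant t\leqslant T}\|u_\varepsilon(t)-Z_\varepsilon(t)\|_{-\frac12-\alpha}>\delta\Big)=-\infty\qquad\text{for all }\delta>0,$$
where $Z_\varepsilon$ solves \eqref{linear} with $z_0=u_0$. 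Set $v_\varepsilon:=u_\varepsilon-Z_\varepsilon$; then $v_\varepsilon$ is the ``shifted equation'' solving $\frac{d}{dt}v_\varepsilon=\varepsilon\Delta v_\varepsilon-\varepsilon(v_\varepsilon+Z_\varepsilon)^3$ with $v_\varepsilon(0)=0$, i.e. in mild form $v_\varepsilon(t)=-\varepsilon\int_0^t e^{\varepsilon(t-s)\Delta}(v_\varepsilon(s)+Z_\varepsilon(s))^3\,ds$. Since in $d=1$ the white noise is regular enough that $Z_\varepsilon\in C\mathcal{C}^{-\frac12-\alpha}$ with no renormalization needed, $v_\varepsilon$ will actually live in a space of positive regularity (it gains two derivatives off $-\tfrac12-\alpha$ via the heat semigroup, modulo the loss from cubing), so the cube $(v_\varepsilon+Z_\varepsilon)^3$ makes classical sense once we expand and control the mixed terms $v_\varepsilon^2 Z_\varepsilon$, $v_\varepsilon Z_\varepsilon^2$, $Z_\varepsilon^3$ using the paraproduct/multiplication estimates on Besov spaces.

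The key steps, in order: (i) derive the shifted equation for $v_\varepsilon$ and its mild formulation; (ii) establish a deterministic a priori bound of the schematic form
$$\sup_{0\leqslant t\leqslant T}\|v_\varepsilon(t)\|_{\gamma}\leqslant \Phi\big(\varepsilon,\ \|u_0\|_{-\beta},\ \sup_{0\leqslant t\leqslant T}\|Z_\varepsilon(t)\|_{-\frac12-\alpha}\big)$$
for a suitable $\gamma>0$, where $\Phi$ is a fixed polynomial-type function carrying an explicit positive power of $\varepsilon$ — this comes from energy estimates for the shifted equation (as in \cite{DP04}, cf. the reference to \eqref{shifted equation} in the introduction) combined with the Schauder estimate Lemma \ref{lemma2}, noting that the factor $\varepsilon$ in front of the drift produces the decisive smallness; (iii) feed in the tail bound on $\sup_t\|Z_\varepsilon(t)\|_{-\frac12-\alpha}$, which by the computation in the proof of Theorem \ref{weak result} satisfies $\big(E\sup_t\|Z_\varepsilon(t)\|_{-\frac12-\alpha}^p\big)^{1/p}\leqslant Cp^{1/2}$ uniformly in $\varepsilon$ (in fact $Z_\varepsilon$ enjoys Gaussian/Wiener-chaos tails), so Gaussian-type concentration gives $\varepsilon\log P(\sup_t\|Z_\varepsilon(t)\|_{-\frac12-\alpha}>R)\to-\infty$ for each $R$; (iv) combine (ii) and (iii): on the event where $\sup_t\|Z_\varepsilon(t)\|_{-\frac12-\alpha}\leqslant R$ we get $\sup_t\|v_\varepsilon(t)\|_\gamma\leqslant \Phi(\varepsilon,\|u_0\|_{-\beta},R)$, which for fixed $R$ tends to $0$ as $\varepsilon\to0$ because of the explicit power of $\varepsilon$, hence is eventually $<\delta$; while the complementary event has superexponentially small probability. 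Choosing $R=R(\varepsilon)$ growing slowly (or simply sending $\varepsilon\to0$ first and then $R\to\infty$) closes the argument, and $\gamma>\tfrac12+\alpha$ for small $\alpha$ lets us bound the $\mathcal{C}^{-\frac12-\alpha}$ norm of $v_\varepsilon$ by its $\mathcal{C}^\gamma$ norm.

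The main obstacle I expect is step (ii): getting an a priori bound on $v_\varepsilon$ that is simultaneously (a) global in time on $[0,T]$, (b) uniform in the noise only through $\sup_t\|Z_\varepsilon\|_{-\frac12-\alpha}$, and (c) carries a genuine positive power of $\varepsilon$ so that the bound vanishes as $\varepsilon\to0$. The cubic nonlinearity is the difficulty — naively iterating the mild formulation gives a quadratic (hence potentially blow-up-prone) Grönwall inequality, so one must use the good sign of $-u^3$ via an energy estimate: testing the equation for $v_\varepsilon$ against a suitable functional, the dominant term $-\varepsilon\int v_\varepsilon^4$ is dissipative and absorbs the dangerous cross terms $v_\varepsilon^3 Z_\varepsilon$ etc. by Young's inequality, leaving only powers of $\|Z_\varepsilon\|$ on the right. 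Tracking the $\varepsilon$-dependence carefully through this energy estimate — noting that both the dissipation and the drift come with $\varepsilon$, while the ``input'' $Z_\varepsilon$ itself is $O(\sqrt\varepsilon)$ in its fluctuation part — is where the real work lies; the rest is bookkeeping with the Besov product estimates and the Schauder lemma already recalled in Section 2.
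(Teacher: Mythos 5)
Your skeleton matches the paper's: reduce to exponential equivalence via Lemma \ref{EXEQ}, write the shifted equation for $v_\varepsilon=u_\varepsilon-Z_\varepsilon$, run an $L^{2p}$ energy estimate that uses the good sign of the cubic to absorb the cross terms, and finish with the mild formulation and Schauder. The gap is in how you convert the pathwise bound into a superexponential probability bound, i.e.\ your steps (ii)--(iv). First, the energy estimate forces you to control $Z_\varepsilon$ in $L^\infty$ (the surviving term is $\varepsilon\int_0^t\|Z_\varepsilon(s)\|^{2p+2}_{L^\infty(\mathbb{T})}ds$); the weak norm $\sup_t\|Z_\varepsilon(t)\|_{-\frac12-\alpha}$ on which you propose to condition cannot bound the products $v_\varepsilon^{2p-1}Z_\varepsilon^3$ etc., so a bound of the form $\Phi(\varepsilon,\|u_0\|_{-\beta},\sup_t\|Z_\varepsilon\|_{-\frac12-\alpha})$ is not available. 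Second, your tail claim is false: for the fluctuation part $\overline{Z}_\varepsilon=Z_\varepsilon-e^{\varepsilon t\Delta}u_0$ one has $(E\|\overline{Z}_\varepsilon\|^p_{CL^\infty})^{1/p}\leqslant C\varepsilon^{\frac14-\kappa'}p^{\frac12}$ (this is (\ref{estimate of Z})), so Gaussian concentration only gives $P(\|\overline{Z}_\varepsilon\|_{CL^\infty}>R)\lesssim\exp(-cR^2\varepsilon^{-\frac12+2\kappa'})$, whence $\varepsilon\log P\to 0$, not $-\infty$; the event $\{\|\overline{Z}_\varepsilon\|_{CL^\infty}>R\}$ with $R$ fixed is \emph{not} negligible at speed $\varepsilon^{-1}$, and even the two-step limit ($\varepsilon\to0$ then $R\to\infty$) yields $0$ for every $R$. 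A truncation can only be rescued with an $\varepsilon$-dependent threshold of order $\varepsilon^{-\frac14-}$ calibrated to the size of $\overline{Z}_\varepsilon$, which is exactly the bookkeeping you deferred.

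The paper avoids truncation entirely: it feeds the moment bound (\ref{estimate of Z}) directly into the energy/mild estimates to get
$(E\sup_t\|v_\varepsilon(t)\|^q_{L^2(\mathbb{T})})^{1/q}\leqslant C(\varepsilon^{\frac32-2(\beta'+\beta)}+\varepsilon^{\frac52-4\kappa'}q^2+\varepsilon^{1-\frac{3(\beta'+\beta)}{2}}+\varepsilon^{\frac74-3\kappa'}q^{\frac32})$,
and then applies Chebyshev with $q=1/\varepsilon$; each term still vanishes because the prefactors $\varepsilon^{\frac52-4\kappa'},\varepsilon^{\frac74-3\kappa'}$ beat $q^2=\varepsilon^{-2}$ and $q^{3/2}=\varepsilon^{-3/2}$. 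The decisive input is thus the quantitative smallness $\|\overline{Z}_\varepsilon\|_{CL^\infty}=O(\varepsilon^{\frac14-})$ of the fluctuation in the \emph{strong} norm (which, as the paper's Remark stresses, relies on $\overline{Z}_\varepsilon(0)=0$ and fails for the stationary solution); your proposal does not isolate this, and without it the argument does not close. Two smaller points: there is no need for a positive-regularity bound $\gamma>\frac12+\alpha$ on $v_\varepsilon$ — the paper only bounds $\sup_t\|v_\varepsilon(t)\|_{L^2(\mathbb{T})}$ and uses the embedding $L^2(\mathbb{T})\hookrightarrow\mathcal{C}^{-\frac12-\alpha}$ of Lemma \ref{lemma1}; and the fluctuation of $Z_\varepsilon$ is $O(\sqrt{\varepsilon})$ only in the weak norm, not in $CL^\infty$.
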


Let $Z_\varepsilon(t)$ be the solution to the linear equation with the same initial condition as $u$ :

$$\aligned
&dZ_\varepsilon(t)= \varepsilon \Delta Z_\varepsilon(t)dt+\sqrt{\varepsilon}dW(t), \\
&Z_\varepsilon(0)= u_0.\\
\endaligned$$

Theorem \ref{weak result} implies that $Z_\varepsilon$ satisfies a large deviation principle on the space $C\mathcal{C}^{-\frac{1}{2}-\alpha}$ with the rate function $I^{u_0}$. By Lemma \ref{EXEQ}, our task is to show that $u_\varepsilon$ and $Z_\varepsilon$ are exponentially equivalent in $C\mathcal{C}^{-\frac{1}{2}-\alpha}$. 

\subsection{Estimate of $Z_\varepsilon$}

In this subsection,  we follow the notations from \cite[ Section 9]{GP15} to estimate $Z_\varepsilon$:  We represent the white noise in terms of its spatial Fourier transform. Let $E=\mathbb{Z}\setminus \{0\}$ and let $W(s,k)= \langle W(s), e_k\rangle $, where $\{e_k:=2^{-\frac{1}{2}}e^{i\pi kx}\}_{k\in \mathbb{Z}}$ is the Fourier basis of $L^2(\mathbb{T})$. Here for simplicity we assume that $\langle W(s), e_0\rangle=0$ and restrict ourselves to the flow of $\int_{\mathbb{T}}u(x)dx=0$. In the following we view $W(s,k)$ as a Gaussian process on $\mathbb{R}\times E$ with covariance given by
$$E[\int_{\mathbb{R}\times E}f(\eta)w(d\eta)\int_{\mathbb{R}\times E}g(\eta')w(d\eta')]=\int_{\mathbb{R}\times E}f(\eta_1)g(\eta_{-1})d\eta,$$
where $\eta_a=(s_a,k_a)$ and the measure $d\eta_a=ds_adk_a$ is the product measure of the Lebesgue measure $ds$ on $\mathbb{R}$ and the counting measure $dk$ on $E$ .

 Let $\overline{Z}_\varepsilon=Z_\varepsilon-e^{\varepsilon{t}\Delta}u_0$, then 
$$\overline{Z}_\varepsilon(t,x)=\int_{\mathbb{R}\times E}\sqrt{\varepsilon}e_k(x)e^{-\varepsilon(t-s)\pi|k|^2}1_{\{0<s<t\}}W(d\eta).$$ 

Now we have the following calculations: for $s, t\in [0, T]$,
\begin{equation}\label{half estimate of Z}\aligned
&E[|\triangle_j (\overline{Z}_\varepsilon(t)-\overline{Z}_\varepsilon(s))|^2]\\
=&E[|\int \theta_j(k_1)( \sqrt{\varepsilon}e_{k_1}e^{-\varepsilon(t-s_1)\pi|k_1|^2}1_{\{0<s_1<t\}}-\sqrt{\varepsilon}e_{k_1}e^{-\varepsilon(s-s_1)\pi|k_1|^2}1_{\{0<s_1<s\}})W(d\eta_1)|^2]\\   
                              \leqslant &\varepsilon C\int\theta_j^2(k_1)(e^{-2\varepsilon(t-s_1)\pi|k_1|^2}1_{\{s<s_1<t\}}+|e^{-\varepsilon(t-s)\pi|k_1|^2}-1|^2e^{-2\varepsilon(s-s_1)\pi|k_1|^2}1_{\{0<s_1<s\}})d\eta_1\\                        
                              \leqslant &C\int\theta_j(k_1)^2\frac{(\varepsilon |t-s||k_1|^2)^{2\kappa}}{|k_1|^{2}}dk_1\\
                              \leqslant &C\varepsilon^{2\kappa}|t-s|^{2\kappa}2^{j}\frac{1}{(2^{j})^{2-4\kappa}}=C\varepsilon^{2\kappa}|t-s|^{2\kappa}2^{j(-1+4\kappa)},\\
\endaligned                             
\end{equation}
where we  use $1-e^x\leqslant|x|^\kappa$ for $\kappa\in(0,1), x<0$ in the fourth inequality and $k\in \text{supp} \theta_j\subset 2^j\mathcal{A}$($\mathcal{A}$ is an annulus) in the last inequality.  Here the constant $C$ is independent of $\varepsilon$ and may change from line to line.

By Nelson’s hypercontractive estimate in \cite{Nel73}, we have for $p>2$, there exists a constant $C$ independent of $p,\varepsilon$ such that

$$\aligned
E\|\triangle_j (\overline{Z}_\varepsilon(t)-\overline{Z}_\varepsilon(s))\|^p_{L^p(\mathbb{T})}&=\int E|\triangle_j (\overline{Z}_\varepsilon(t)-\overline{Z}_\varepsilon(s))|^p(x)dx\\
&\leqslant C^pp^{\frac{p}{2}}\int (E|\triangle_j (\overline{Z}_\varepsilon(t)-\overline{Z}_\varepsilon(s))|^2(x))^{\frac{p}{2}}dx.
\endaligned$$

Let $\kappa=\frac{1}{4}-\kappa'$ for $\kappa'>0$ small enough,  we obtain

$$
E\|\overline{Z}_\varepsilon(t)-\overline{Z}_\varepsilon(s)\|^p_{B^{\kappa'}_{p,p}(\mathbb{T})}\leqslant C^pp^{\frac{p}{2}}(\varepsilon|t-s|)^{(\frac{1}{4}-\kappa')p}.$$

Then Lemma $\ref{lemma1}$ and Kolmogorov's continuity criterion implies that for $p>\frac{1}{\kappa'}$, we have

\begin{equation}\label{estimate of Z}
E\|\overline{Z}_\varepsilon\|^p_{CL^\infty}\leqslant E\|\overline{Z}_\varepsilon\|^p_{C\mathcal{C}^{\kappa'-\frac{1}{p}}}\leqslant E\|\overline{Z}_\varepsilon\|^p_{C([0,T]; B^{\kappa'}_{p,p}(\mathbb{T}))}\leqslant C^p\varepsilon^{(\frac{1}{4}-\kappa')p}p^{\frac{p}{2}}.
\end{equation}

\begin{remark}
We want to emphasize that (\ref{estimate of Z}) only holds for $\overline{Z}_\varepsilon$ due to $\overline{Z}_\varepsilon(0)=0$. For the stationary one this does not hold since the expectation of  the stationary one does not depend on $\varepsilon$.
\end{remark}

\subsection{Exponentially equivalence}

To prove Theorem \ref{1d case}, by Lemma $\ref{EXEQ}$, we only need to prove the following theorem:

\begin{Thm}\label{ExEq}
For any $\delta>0$,

\begin{equation}
\lim_{\varepsilon\rightarrow0}\varepsilon\log P(\sup_{0\leqslant t\leqslant T}\|u_\varepsilon(t)-Z_\varepsilon(t)\|_{{-\frac{1}{2}-\alpha}}>\delta)=-\infty.
\end{equation}
\end{Thm}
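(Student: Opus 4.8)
The plan is to work with the shifted equation. Write $v_\varepsilon = u_\varepsilon - Z_\varepsilon$. Since $u_\varepsilon$ and $Z_\varepsilon$ share the same initial datum $u_0$ and the same noise, $v_\varepsilon$ solves a random PDE with zero initial condition:
$$
\partial_t v_\varepsilon = \varepsilon\Delta v_\varepsilon - \varepsilon(v_\varepsilon + Z_\varepsilon)^3,\qquad v_\varepsilon(0)=0.
$$
Expanding the cube, the mild formulation reads
$$
v_\varepsilon(t) = -\varepsilon\int_0^t e^{\varepsilon(t-s)\Delta}\bigl(v_\varepsilon^3 + 3v_\varepsilon^2 Z_\varepsilon + 3 v_\varepsilon Z_\varepsilon^2 + Z_\varepsilon^3\bigr)(s)\,ds .
$$
Further split $Z_\varepsilon = e^{\varepsilon t\Delta}u_0 + \overline Z_\varepsilon$; the term $e^{\varepsilon t\Delta}u_0$ lives in a negative Hölder space, but by Lemma~\ref{lemma2} it costs only a mild negative power of $\varepsilon$, while $\overline Z_\varepsilon$ is controlled in $CL^\infty$ with the good bound (\ref{estimate of Z}). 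The target norm is weak ($\mathcal{C}^{-\frac12-\alpha}$), which is what makes the energy estimate feasible: we do not need $v_\varepsilon$ itself to be a function, only that its $L^2$-in-space (or $L^p$) norm be controlled on a high-probability set.

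The core step is an a priori estimate. On the event $\Omega_{\varepsilon,M} := \{\|\overline Z_\varepsilon\|_{CL^\infty}\le M\varepsilon^{1/4-\kappa'}\}$ (whose complement has the right super-exponentially small probability by (\ref{estimate of Z}) combined with Chebyshev, exactly as in the proof of Theorem~\ref{weak result}: take the $p$-th moment, then $p=1/\varepsilon$), I would test the shifted equation with $v_\varepsilon$ in $L^2(\mathbb{T})$. The diffusion term $-\varepsilon\|\nabla v_\varepsilon\|_{L^2}^2$ is favorable; the term $-\varepsilon\int v_\varepsilon^4$ is also favorable (a damping term, since the leading nonlinearity has the good sign); the remaining cross terms $-3\varepsilon\int v_\varepsilon^3 Z_\varepsilon$, $-3\varepsilon\int v_\varepsilon^2 Z_\varepsilon^2$, $-3\varepsilon\int v_\varepsilon Z_\varepsilon^3$ must be absorbed. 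Writing $Z_\varepsilon = e^{\varepsilon t\Delta}u_0 + \overline Z_\varepsilon$ and using that $\overline Z_\varepsilon\in L^\infty$ is small, Young's inequality lets one absorb the top power $v_\varepsilon^3 \overline Z_\varepsilon$ into $\varepsilon\int v_\varepsilon^4$ at the cost of $\varepsilon \|\overline Z_\varepsilon\|_{L^\infty}^4 \lesssim \varepsilon\cdot M^4\varepsilon^{1-4\kappa'}$, and similarly for the other two cross terms; the genuinely singular piece involving $e^{\varepsilon t\Delta}u_0$ is handled in the mild formulation rather than the energy identity, paying a factor $\varepsilon^{\gamma}$ via Lemma~\ref{lemma2} (here one uses that $e^{\varepsilon(t-s)\Delta}$ maps $\mathcal{C}^{-2-\beta}$ into something with a time-integrable singularity, as already done in Theorem~\ref{weak result}). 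Gronwall then gives $\sup_{t\le T}\|v_\varepsilon(t)\|_{L^2}^2 \le C(M)\,\varepsilon^{\theta}$ for some $\theta>0$, on $\Omega_{\varepsilon,M}$.

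Given the a priori bound, the embedding $L^2(\mathbb{T})\hookrightarrow\mathcal{C}^{-\frac12-\alpha}$ from Lemma~\ref{lemma1} converts it to $\sup_{t\le T}\|v_\varepsilon(t)\|_{-\frac12-\alpha}\le C(M)\varepsilon^{\theta/2}$ on $\Omega_{\varepsilon,M}$. Hence for fixed $\delta>0$ and $\varepsilon$ small enough, $\{\sup_t\|v_\varepsilon(t)\|_{-\frac12-\alpha}>\delta\}\subset \Omega_{\varepsilon,M}^c$, so
$$
\varepsilon\log P\bigl(\textstyle\sup_{0\le t\le T}\|u_\varepsilon(t)-Z_\varepsilon(t)\|_{-\frac12-\alpha}>\delta\bigr)\le \varepsilon\log P(\Omega_{\varepsilon,M}^c)\longrightarrow -\infty
$$
by the Chebyshev/moment argument with $p=1/\varepsilon$ applied to (\ref{estimate of Z}), for every fixed $M$; letting $M\to\infty$ is not even needed since a single large $M$ already forces the right-hand side to $-\infty$. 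This is exactly the exponential equivalence claimed, and then Lemma~\ref{EXEQ} together with Theorem~\ref{weak result} yields Theorem~\ref{1d case}.

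The main obstacle I anticipate is the rough initial datum $u_0 \in \mathcal{C}^{-\beta}$: the term $e^{\varepsilon t\Delta}u_0$ is not a bounded function, so it cannot enter the $L^2$ energy identity directly, and one must carefully interleave the energy estimate for the ``regular'' part of $v_\varepsilon$ with mild-formulation/Schauder estimates (Lemma~\ref{lemma2}) for the contribution of $e^{\varepsilon t\Delta}u_0$, tracking that every such contribution carries a strictly positive power of $\varepsilon$ so that $\theta>0$ in the end. A secondary technical point is bookkeeping the powers of $\varepsilon$ versus $p$ in the moment bound (\ref{estimate of Z}) so that after setting $p=1/\varepsilon$ the logarithm indeed diverges to $-\infty$; this is the same mechanism as in Theorem~\ref{weak result} and should go through verbatim once the deterministic a priori bound is in place.
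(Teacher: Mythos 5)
Your overall skeleton is the same as the paper's: pass to the shifted equation for $v_\varepsilon=u_\varepsilon-Z_\varepsilon$ with zero initial datum, run an energy estimate in which $-\varepsilon\int v_\varepsilon^4$ absorbs the cross terms via Young, split $Z_\varepsilon=e^{\varepsilon t\Delta}u_0+\overline Z_\varepsilon$ with Lemma~\ref{lemma2} giving an integrable-in-time singularity for the initial-data part (which, contrary to your worry, \emph{can} go straight into the energy identity since $\|e^{\varepsilon s\Delta}u_0\|_{L^\infty}\lesssim(\varepsilon s)^{-(\beta+\beta')/2}$), and finish with Chebyshev at exponent $1/\varepsilon$. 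The genuine gap is in the probabilistic step: your localization event $\Omega_{\varepsilon,M}=\{\|\overline Z_\varepsilon\|_{CL^\infty}\le M\varepsilon^{1/4-\kappa'}\}$ does \emph{not} have a super-exponentially negligible complement at speed $\varepsilon$. From (\ref{estimate of Z}), Chebyshev gives $P(\Omega_{\varepsilon,M}^c)\le\inf_p (C\sqrt p/M)^p\approx e^{-cM^2}$, a constant in $\varepsilon$; with your prescription $p=1/\varepsilon$ the bound is $(C\varepsilon^{-1/2}/M)^{1/\varepsilon}$, which is eventually larger than $1$. Indeed $\|\overline Z_\varepsilon\|_{CL^\infty}$ behaves like a Gaussian of size $\varepsilon^{1/4}$, so $P(\Omega_{\varepsilon,M}^c)\approx\exp(-cM^2\varepsilon^{-2\kappa'})$ and $\varepsilon\log P(\Omega_{\varepsilon,M}^c)\to 0$, not $-\infty$. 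So the inclusion $\{\sup_t\|v_\varepsilon\|>\delta\}\subset\Omega_{\varepsilon,M}^c$ buys you nothing.

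Nor can the event simply be recalibrated while keeping your plain $L^2$ energy estimate: that estimate yields $\sup_t\|v_\varepsilon(t)\|_{L^2}^2\le C(\varepsilon^{1-2(\beta+\beta')}+\varepsilon\|\overline Z_\varepsilon\|_{CL^\infty}^4)$, so smallness on the event needs a threshold $\lambda\lesssim\varepsilon^{-1/4}$, while negligibility of the complement at speed $\varepsilon$ needs $\lambda^2\varepsilon^{-1/2}\gg\varepsilon^{-1}$, i.e.\ $\lambda\gg\varepsilon^{-1/4}$ --- incompatible. The same defect shows up if you drop the event and take moments of the $L^2$ bound directly: $\varepsilon^{1/2}(E\|\overline Z_\varepsilon\|^{2q}_{CL^\infty})^{1/q}\lesssim\varepsilon^{1-2\kappa'}q$, which does not vanish at $q=1/\varepsilon$. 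This is why the paper does \emph{not} stop at $L^2$: it runs the energy estimate at $p=3$ to control $\|v_\varepsilon\|_{L^6}^3$ and then feeds that into the mild formulation, whose prefactor $\varepsilon\int_0^t e^{\varepsilon(t-s)\Delta}(\cdot)\,ds$ supplies one more power of $\varepsilon$, leaving terms $\varepsilon^{3/2}\|\overline Z_\varepsilon\|^4_{CL^\infty}$ and $\varepsilon\|\overline Z_\varepsilon\|^3_{CL^\infty}$ whose $q$-th moments at $q=1/\varepsilon$ scale like $\varepsilon^{1/2-4\kappa'}$ and $\varepsilon^{1/4-3\kappa'}$ and hence vanish. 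To repair your argument you need both fixes: gain the extra $\varepsilon$ via the $L^6$--mild-formulation two-step, and replace the localization event by the direct moment/Chebyshev computation (or choose a threshold of order $\varepsilon^{-3/8}$, which only works once the extra $\varepsilon$ is in hand).
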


\begin{proof}

At the beginning of the proof,  we should point out that the constant $C$ in the following is independent of $\varepsilon, p$ and may change from line to line.

Let $v_\varepsilon(t):=u_\varepsilon(t)-Z_\varepsilon(t)$, then $v_\varepsilon$ is the solution to the following shifted equation:

\begin{equation}\label{shifted equation}\aligned
&dv_\varepsilon(t)=\varepsilon \Delta v_\varepsilon(t)dt-\varepsilon (v_\varepsilon(t)+ Z_\varepsilon(t))^3dt,\\
&v_\varepsilon(0)=0.\\
\endaligned
\end{equation}

For $p\geqslant 1$, we have 
$$\aligned
\frac{1}{2p}\frac{d}{dt}\|v_\varepsilon\|^{2p}_{L^{2p}(\mathbb{T})}&=\varepsilon\langle \Delta v_\varepsilon, v_\varepsilon^{2p-1} \rangle-\varepsilon\langle v_\varepsilon^3 , v_\varepsilon^{2p-1}\rangle-3\varepsilon\langle v_\varepsilon^2 Z_\varepsilon , v_\varepsilon^{2p-1}\rangle-3\varepsilon\langle v_\varepsilon Z_\varepsilon^2 , v_\varepsilon^{2p-1}\rangle-\varepsilon\langle Z_\varepsilon^3 , v_\varepsilon^{2p-1}\rangle.\\
\endaligned$$

Then 
\begin{align*}
&\frac{1}{2p}\|v_\varepsilon(t)\|^{2p}_{L^{2p}(\mathbb{T})}+\varepsilon\int^t_0[(2p-1)\langle \nabla v_\varepsilon(s), v_\varepsilon^{2p-2}(s)\nabla v_\varepsilon(s)\rangle+\|v_\varepsilon^{2p+2}(s)\|_{L^1(\mathbb{T})}]ds\\
=& -\varepsilon\int^t_0 [3\langle v_\varepsilon^{2p+1}(s), Z_\varepsilon(s)\rangle+3\langle v_\varepsilon^{2p}(s), Z_\varepsilon^2(s)\rangle+\langle v_\varepsilon^{2p-1}(s), Z_\varepsilon^3(s)\rangle ]ds\\
\leqslant& \varepsilon\int^t_0 (a\|v_\varepsilon(s)^{2p+2}\|_{L^1(\mathbb{T})}+C\|Z_\varepsilon(s)\|^{2p+2}_{L^\infty(\mathbb{T})})ds,
\end{align*}
where we use H\"older's inequality and Young's inequality in the last inequality and $a\in(0,1)$. Take $p=3$, for $t\in[0, T]$, we have
\begin{equation}\label{estiamte to shifted equation}\aligned
\|v_\varepsilon(t)\|^6_{L^6(\mathbb{T})}&\leqslant\varepsilon C\int^t_0\|Z_{\varepsilon}(s)\|^8_{L^\infty(\mathbb{T})}ds\\
&\leqslant \varepsilon C\int^t_0(\|e^{\varepsilon s\Delta}u_0\|^8_{\beta'}+\|\overline{Z}_\varepsilon(s)\|^8_{L^\infty(\mathbb{T})})ds\\
&\leqslant \varepsilon C\int^t_0(\frac{1}{(\varepsilon s)^{\frac{8(\beta'+\beta)}{2}}}\|u_0\|^8_{-\beta}+\|\overline{Z}_\varepsilon(s)\|^8_{L^\infty(\mathbb{T})})ds\\
&\leqslant  C (\varepsilon^{1-4(\beta'+\beta)}\|u_0\|^8_{-\beta}+\varepsilon\|\overline{Z}_\varepsilon\|^8_{CL^\infty}),
\endaligned
\end{equation}
where $0<\beta'<\frac{1}{4}-\beta$  and we use Lemma \ref{lemma2} in the third inequality.

Thus  Young's inequality and  the mild formulation of $v_\varepsilon$ given by  
$$v_\varepsilon(t)=\varepsilon\int^t_0e^{\varepsilon(t-s)\Delta}[-v_\varepsilon^3-3v_\varepsilon^2Z_\varepsilon-3v_\varepsilon Z_\varepsilon^2-Z_\varepsilon^3]ds$$
imply that
$$\aligned
\sup_{t\in[0,T]}\|v_\varepsilon(t)\|_{L^2(\mathbb{T})}&\leqslant  \varepsilon C\int^T_0(\|v_\varepsilon(s)\|^3_{L^6(\mathbb{T})}+\|Z_\varepsilon(s)\|^3_{L^\infty(\mathbb{T})})ds\\
   &\leqslant  \varepsilon C\int^T_0(\|v_\varepsilon(s)\|^3_{L^6(\mathbb{T})}+\frac{1}{(\varepsilon s)^{\frac{3(\beta'+\beta)}{2}}}\|u_0\|^3_{-\beta}+\|\overline{Z}_\varepsilon(s)\|^3_{L^\infty(\mathbb{T})})ds\\
   &\leqslant  C(\varepsilon^{\frac{3}{2}-2(\beta'+\beta)}+\varepsilon^{\frac{3}{2}}\|\overline{Z}_\varepsilon\|^4_{C L^\infty}+\varepsilon^{1-\frac{3(\beta'+\beta)}{2}}+\varepsilon\|\overline{Z}_\varepsilon\|^3_{C L^\infty}),
\endaligned$$
where we use Lemma \ref{lemma2} in the second inequality and (\ref{estiamte to shifted equation}) in the last inequality.

 Thus by (\ref{estimate of Z}) we have for $3q>\frac{1}{\kappa'}$
$$\aligned
(E\sup_{t\in[0,T]}\|v_\varepsilon(t)\|^q_{L^2(\mathbb{T})})^\frac{1}{q}&\leqslant C(\varepsilon^{\frac{3}{2}-2(\beta'+\beta)}+\varepsilon^{\frac{3}{2}} (E[\|\overline{Z}_\varepsilon\|^{4q}_{C L^\infty}])^\frac{1}{q}+\varepsilon^{1-\frac{3(\beta'+\beta)}{2}}+\varepsilon (E[\|\overline{Z}_\varepsilon\|^{3q}_{C L^\infty}])^\frac{1}{q})\\
   &\leqslant  C(\varepsilon^{\frac{3}{2}-2(\beta'+\beta)}+\varepsilon^{\frac{5}{2}-4\kappa'}q^2+\varepsilon^{1-\frac{3(\beta'+\beta)}{2}}+\varepsilon^{\frac{7}{4}-3\kappa'}q^\frac{3}{2}).
\endaligned$$

Therefore, by Chebyshev's inequality  and Lemma \ref{lemma1} we have
\begin{align*}
&\varepsilon\log P(\sup_{0\leqslant t\leqslant T}\|v_\varepsilon(t)\|_{{-\frac{1}{2}-\alpha}}>\delta)\\
   \leqslant &\varepsilon\log\frac{E\sup_{t\in[0,T]}\|v_\varepsilon(t)\|^q_{L^2(\mathbb{T})}}{\delta^{q}}\\
   \leqslant &\varepsilon q[\log [C(\varepsilon^{\frac{3}{2}-2(\beta'+\beta)}+\varepsilon^{\frac{5}{2}-4\kappa'}q^2+\varepsilon^{1-\frac{3(\beta'+\beta)}{2}}+\varepsilon^{\frac{7}{4}-3\kappa'}q^\frac{3}{2})]-\log\delta].\\
\end{align*}

Let $q=\frac{1}{\varepsilon}$, we deduce that
$$\lim_{\varepsilon\rightarrow0}\varepsilon\log P(\sup_{0\leqslant t\leqslant T}\|v_\varepsilon(t)\|_{{-\frac{1}{2}-\alpha}}>\delta)=-\infty.$$
\end{proof}

Then Theorem $\ref{1d case}$ follows from Lemma \ref{EXEQ} and Theorem \ref{ExEq} .


\bibliography{Ref0}{}  
\bibliographystyle{alpha}


\end{document}